\DeclareMathOperator*{\minimize}{minimize\;}
\DeclareMathOperator*{\maximize}{maximize\;}
\DeclareMathOperator{\prox}{prox}
\DeclareMathOperator{\Id}{Id}
\DeclareMathOperator{\sign}{sign}
\newcommand{\NN}{\mathbb{N}}
\newcommand{\RR}{\mathbb{R}}
\newcommand{\LL}{\mathbb{L}}
\newcommand{\HH}{\mathcal{H}}
\newcommand{\GG}{\mathcal{G}}
\newcommand{\EE}{\mathsf{E}}
\newcommand{\PP}{\mathsf{P}}
\newcommand{\HHH}{{\ensuremath{\boldsymbol{\mathcal{H}}}}}
\newcommand{\GGG}{{\ensuremath{\boldsymbol{\mathcal{G}}}}}
\newcommand{\rr}{\boldsymbol{r}}
\newcommand{\sv}{\boldsymbol{s}}
\newcommand{\tw}{\boldsymbol{t}}
\newcommand{\vv}{\boldsymbol{v}}
\newcommand{\ww}{\boldsymbol{w}}
\newcommand{\xx}{\boldsymbol{x}}
\newcommand{\zz}{\boldsymbol{z}}
\newcommand{\TT}{\boldsymbol{A}}
\newcommand{\AAA}{\boldsymbol{A}}
\newcommand{\ff}{\boldsymbol{f}}
\newcommand{\hh}{\boldsymbol{h}}
\theoremstyle{plain}{\theorembodyfont{\rmfamily}%
}
\begin{document}

\title{%
A Random Block-Coordinate Douglas-Rachford Splitting Method with Low Computational Complexity for Binary Logistic Regression%
\thanks{This work was partly supported by the the CNRS MASTODONS project under grant 2016TABASCO.}
}

\titlerunning{Random Douglas-Rachford Splitting with Low Computational Complexity}

\author{%
Luis~M.~Brice{\~n}o-Arias\and%
Giovanni~Chierchia\and%
Emilie~Chouzenoux\and%
Jean-Christophe~Pesquet%
}

\institute{%
L.~M.~Brice{\~n}o-Arias \at Departamento de Matem\'atica, Universidad T\'ecnica Federico Santa Mar\'ia, Av Espan\~a 1681, Valpara\'iso, Chile.
\and
G. Chierchia (corresponding author) and E. Chouzenoux \at Universit\'e Paris Est, LIGM, CNRS UMR 8049, ESIEE Paris, UPEM, Noisy-le-Grand, France.
\and
E. Chouzenoux and J.-C. Pesquet \at Center for Visual Computing, INRIA Saclay, CentraleSup\' elec, University Paris-Saclay, Gif sur Yvette, France.
}

\maketitle

\begin{abstract}
In this paper, we propose a new optimization algorithm for sparse logistic regression based on a stochastic version of the Douglas-Rachford splitting method. Our algorithm sweeps the training set by randomly selecting a mini-batch of data at each iteration, and it allows us to update the variables in a block coordinate manner.
Our approach leverages the proximity operator of the logistic loss, which is expressed with the 
generalized Lambert W function.
Experiments carried out on standard datasets demonstrate the efficiency of our approach w.r.t.\ stochastic gradient-like methods.%
\keywords{Proximity operator \and Douglas-Rachford splitting \and Block-coordinate descent \and Logistic regression}
\end{abstract}

\section{Introduction}
Sparse classification algorithms have gained much popularity in the context of supervised learning, thanks to their ability to discard irrelevant features during the training stage. Such algorithms aim at learning a weighted linear combination of basis functions that fits the training data
, while encouraging as many weights as possible to be equal to zero. This amounts to solving an optimization problem that involves a loss function plus a sparse regularization term. Different types of classifiers arise by varying the loss function, the most popular being the hinge and the logistic losses \cite{Rosasco2004,Bartlett2006}. 

In the context of supervised learning, sparse regularization traces back to the work of Bradley and Mangasarian~\cite{Bradley1998}, who showed that the $\ell_1$-norm can efficiently perform feature selection by shrinking small coefficients to zero. Other forms of regularization have also been studied, such as the $\ell_0$-norm~\cite{Weston2002}, the $\ell_p$-norm with $p > 0$ \cite{Liu2007}, the $\ell_\infty$-norm \cite{Zou2008}, and other nonconvex terms \cite{Laporte2014}. Mixed-norms have been investigated as well, due to their ability to impose a more structured form of sparsity \cite{Krishnapuram2005, Meier2008, Duchi2009, Obozinski2010, Yuan2010, Rosasco2013}.

Many efficient learning algorithms exist in the case of quadratic regularization, by benefiting from the advantages brought by Lagrangian duality \cite{Blondel2014}. This is unfortunately not true for sparse regularization, because the dual formulation is as difficult to solve as the primal one. Consequently, sparse linear classifiers are usually trained through the direct resolution of the primal optimization problem. Among the possible approaches, one can resort to linear programming \cite{Wang2007}, gradient-like methods \cite{Krishnapuram2005, Mairal2013}, proximal algorithms \cite{Laporte2014, Chierchia2015, Barlaud2015}, and other optimization techniques \cite{Tan2010}. 

Nowadays, it is well known that random updates can significantly reduce the computational time when a quadratic regularization is used \cite{Hsieh2008, Lacoste2013}. Therefore, a great deal of attention has been paid recently to stochastic approaches capable of handling a sparse regularization \cite{Pereyra2016}. The list of investigated techniques includes block-coordinate descent strategies \cite{Shalev2011, Blondel2013, Fercoq2015,Lu2015}, stochastic forward-backward iterations \cite{Langford2009, Xiao2010, Richtarik2014, Rosasco2014, Combettes2016}, random Douglas-Rachford splitting methods \cite{Combettes2015}, random primal-dual proximal algorithms 
\cite{Pesquet2015}, and stochastic majorization-minimization methods \cite{Mairal2013b, Chouzenoux2015}. 


In this paper, we propose a random-sweeping block-coordinate Douglas-Rachford splitting method. In addition to the stochastic behavior, it presents three distinctive features with respect to related approaches \cite{Briceno2011, Bot2013, Combettes2015}. Firstly, the matrix to be inverted at the initial step is block-diagonal, while in the concurrent approaches, it did not present any specific structure. The block diagonal property implies that the inversion step actually amounts to inverting a set of smaller size matrices. Secondly, the proposed algorithm can take advantage explicitly from a strong convexity property possibly fulfilled by some of the functions involved in the optimization problem. Finally, the dual variables appear explicitly in the proposed scheme, making it possible to use clever block-coordinate descent strategies \cite{Perekrestenko2017}.

Moreover, the proposed algorithm appears to be well tailored to binary logistic regression with sparse regularization. In contrast to gradient-like methods, our approach deals with the logistic loss through its proximity operator. This results in an algorithm that is not tied up to the Lipschitz constant of the loss function, possibly leading to larger updates per iteration. In this regard, our second contribution is to show that the proximity operator of the binary logistic loss can be expressed in closed form using the generalized Lambert W function \cite{Mezo2014, Maignan2016}. We also provide comparisons with state-of-the-art stochastic methods using benchmark datasets.

The paper is organized as follows. In Section~\ref{sec:optim}, we derive the proposed Douglas-Rachford algorithm. In Section~\ref{sec:logistic}, we introduce sparse logistic regression, along with the proximal operator of the logistic loss. In Section~\ref{sec:four}, we evaluate our approach on standard datasets, and compare it to three sparse classification algorithms proposed in the literature \cite{Xiao2010, Rosasco2014, Chierchia2016}. Finally, conclusions are drawn in Section~\ref{sec:five}.

\vspace{0.5em}

\textsc{Notation}: $\Gamma_0(\HH)$ denotes the set of proper, lower semicontinuous, convex functions from a real Hilbert space $\HH$ to $]-\infty,+\infty]$. 
Let  $\psi\in\Gamma_0(\HH)$. For every $\nu \in \HH$, the subdifferential of $\psi$ at $\nu$ is $\partial \psi(\nu) = \{\xi\in\HH~\mid~(\forall \zeta\in\HH)\; \left\langle \zeta-\nu \mid \xi\right\rangle +\psi(\nu)\leq \psi(\zeta)\}$, the proximity operator of $\psi$ at $\nu$ is $\prox_\psi(\nu)= \operatorname*{argmin}_{\xi\in\HH}\; \frac12 \|\xi-\nu\|^2 + \psi(\nu)$, and the conjugate of $\psi$ is $\psi^*= \sup_{\xi\in\HH} \left\langle \xi\mid\cdot \right\rangle -\psi(\xi)$ in $\Gamma_0(\HH)$. 
The adjoint of a bounded linear operator $A$ from $\HH$ to a real Hilbert space $\GG$ is denoted by $A^*$
Let $(\Omega,\mathcal{F},\mathbb{P})$ be the underlying probability space,
the $\boldsymbol{\sigma}$-algebra generated by a family $\Phi$ of random variables is denoted by $\boldsymbol{\sigma}(\Phi)$.

\section{Optimization method}\label{sec:optim}
Throughout this section, $\HH_1,\ldots,\HH_B$, $\GG_1,\ldots,\GG_L$ are separable real Hilbert spaces. In addition,
$\HHH = \HH_1 \oplus \cdots \oplus \HH_B$ denotes the Hilbertian sum of 
$\HH_1,\ldots,\HH_B$.
Any vector $\boldsymbol{v}\in \HHH$ can thus be uniquely decomposed as $(v_b)_{1\le b 
\le B}$ where, for every $b\in \{1,\ldots,B\}$,
$v_b \in \HH_b$. In the following, a similar notation will be used to denote vectors in any product space (in bold) and their components.

We will now aim at solving the following problem.
\begin{problem}\label{problem}
For every $b\in\{1,\dots,B\}$ and for every $\ell\in\{1,\dots,L\}$, let $f_b \in \Gamma_0(\HH_b)$, let $h_\ell\colon \GG_\ell \to \RR$
be a differentiable convex function with $\beta_\ell$-Lipschitz gradient, for some $\beta_\ell 
\in 
]0,+\infty[$,
and let $A_{\ell,b}$ be a linear bounded operator from $\HH_b$ to $\GG_\ell$.  
The problem is to
\[
\minimize_{\boldsymbol{w} \in\HHH} \; \sum_{b=1}^B f_b(w_b) + \sum_{\ell=1}^L 
h_\ell\Big(\sum_{b=1}^B A_{\ell,b} w_b\Big),
\]
under the assumption that the set of solutions $\mathcal{E}$ is nonempty.
\end{problem}

In order to address Problem~\ref{problem}, we propose to employ the random-sweeping 
block-coordinate version of the Douglas-Rachford splitting method with stochastic errors 
described in Algorithm \ref{algo:bcdg}. Let us define
\begin{equation}
(\forall b\in\{1,\dots,B\})\quad C_b = \Big(\Id+\tau_b \sum_{\ell=1}^L 
\frac{\gamma_\ell}{1+\gamma_\ell \rho_\ell} A_{\ell,b}^* \, A_{\ell,b} \Big)^{-1}\colon\HH_b\to\HH_b,
\end{equation}
where $(\tau_b)_{1\le b \le B}$ and $(\gamma_\ell)_{1\le \ell \le 1}$ are the positive constants introduced in Algorithm \ref{algo:bcdg}.
\begin{algorithm}
\caption{Random Douglas-Rachford splitting for solving Problem \ref{problem}}\label{algo:bcdg}
{\small
\textsc{Initialization}\\[-0.5em]
\[\hspace{-4.5cm}
\left\lfloor
\begin{aligned}
&\textrm{Set $(\tau_b)_{1\le b \le B}\in ]0,+\infty[^B$ and $\eta \in 
]0,1]$.}\\
&\textrm{For every $\ell \in \{1,\ldots,L\}$, set $\rho_\ell \ge 0$ such that  $B\beta_\ell 
\rho_\ell \le 1$.}\\
&\textrm{For every $\ell \in \{1,\ldots,L\}$, set $\gamma_\ell > 0$ such that $\gamma_\ell \rho_\ell<1$.}\\
&\textrm{$(\forall b\in\{1,\dots,B\})\quad u_b^{[0]} = \sum_{\ell=1}^L 
\frac{1}{1+\gamma_\ell\rho_\ell} A_{\ell,b}^* \, s_{\ell,b}^{[0]}$}\\[0.25em]
\end{aligned}
\right.
\]
\noindent \textsc{For}\; $i = 0, 1, \dots$\\[-0.5em]	
\[
\!\!\!\!\!\!\!\!\!\!\!\!\!\!\!\!\!\!\!\!\!\!\!\!\!\!\!\!\!\!\!\!\!\!\!\!\!\!\!\!\!\!\!\!\!\!\!\!\!\!\!\!\!\!\!\!\!\!\!\!\!\!\!\!\!\!\!\!\!\!\!
\left\lfloor
\begin{aligned}
&\textrm{Set}\; \mu^{[i]}\in\left]\eta,2-\eta\right[\\
%
&\textrm{for $b=1,\dots,B$}\\
&\left\lfloor
\begin{aligned}
& w_b^{[i+1]} = w_b^{[i]}+ \varepsilon_b^{[i]}\left(C_b\left(t_b^{[i]}-\tau_b 
u_b^{[i]}\right)-w_b^{[i]}\right) \\
& t_b^{[i+1]} = t_b^{[i]} + \varepsilon_b^{[i]}\mu^{[i]} \left( \prox_{\tau_b 
f_b}(2w_b^{[i+1]}-t_b^{[i]}) + 
a_b^{[i]} - w_b^{[i+1]} \right)
\end{aligned}
\right.\\[0.5em]
&\textrm{for $\ell=1,\dots,L$}\\
&\left\lfloor
\begin{aligned}
&\boldsymbol{v}_\ell^{[i+1]} =\boldsymbol{v}_\ell^{[i]}+ 
\varepsilon_{B+\ell}^{[i]}\left(\frac{\boldsymbol{s}_\ell^{[i]} +\gamma_\ell \big( A_{\ell,b} 
w_b^{[i]}\big)_{1\le b\le B}}{1+\gamma_\ell\rho_\ell}-\boldsymbol{v}_\ell^{[i]}\right)\\
&p_\ell^{[i]} = 2\sum_{b=1}^B v_{\ell,b}^{[i+1]}-\sum_{b=1}^B s_{\ell,b}^{[i]} \\
&q_\ell^{[i]} = \prox_{\frac{B(1-\gamma_\ell\rho_\ell)}{\gamma_\ell} h_\ell}\big(p_\ell^{[i]}/\gamma_\ell\big) + d_\ell^{[i]}\\
&\textrm{for $b=1,\dots,B$}\\
&\left\lfloor
\begin{aligned}
&s_{\ell,b}^{[i+1]} = s_{\ell,b}^{[i]} + \varepsilon_{B+\ell}^{[i]}\mu^{[i]} \Big(
\frac{ p_\ell^{[i]} - \gamma_\ell \, q_\ell^{[i]} }{B(1-\gamma_\ell\rho_\ell)}
-v_{\ell,b}^{[i+1]}\Big)
\end{aligned}
\right.
\end{aligned}
\right.\\[0.5em]
&\textrm{for $b=1,\dots,B$}\\
&\left\lfloor
\begin{aligned}
u_b^{[i+1]} = u_b^{[i]} + \sum_{\ell=1}^L  \frac{\varepsilon_{B+\ell}^{[i]}}{1+\gamma_\ell\rho_\ell} A_{\ell,b}^* 
\,  \big(s_{\ell,b}^{[i+1]}-s_{\ell,b}^{[i]}\big).
\end{aligned}
\right.\\[0.5em]
\end{aligned}
\right.
\]
}
\end{algorithm}
The next result establishes the convergence of the proposed algorithm.
\begin{proposition}\label{prop:algo}
	For every $b\in\{1,\dots,B\}$, let $w_b^{[0]}$, $t_b^{[0]}$ and $(a_{b}^{[i]}\big)_{i\in\NN}$ 
	be 
	$\HH_b$-valued random variables and, for every $\ell\in\{1,\dots,L\}$, let 
	$\boldsymbol{v}_\ell^{[0]}$ and $\boldsymbol{s}_\ell^{[0]}$ be 
	$\GG_\ell^B$-valued random variables and let $(d_{\ell}^{[i]}\big)_{i\in\NN}$ be
	$\GG_\ell$-valued random variables. In addition, let 
	$(\boldsymbol{\varepsilon}^{[i]})_{i\in\NN}$  be identically 
		distributed $\{0,1\}^{B+L}\setminus\{\boldsymbol{0}\}$-valued random variables 
and in Algorithm \ref{algo:bcdg} assume that
\begin{enumerate}
\item \label{a:prop1iv}  $(\forall i\in\NN)$
$\boldsymbol{\sigma}(\boldsymbol{\varepsilon}^{[i]})$ and 
$\boldsymbol{\chi}^{[i]}=\boldsymbol{\sigma}(\boldsymbol{t}^{[0]},\dots,\boldsymbol{t}^{[i]},\boldsymbol{s}^{[0]},
\dots,\boldsymbol{s}^{[i]})$
are independent;
\item \label{a:prop1i} $(\forall 
b\in\{1,\dots,B\})\;\sum_{i\in\NN}\sqrt{\EE(\|a_{b}^{[i]}\|^2|\boldsymbol{\chi}^{[i]})} < 
+\infty$; 
\item  \label{a:prop1ii} 
$(\forall\ell\in\{1,\dots,L\})\;\sum_{i\in\NN}\sqrt{\EE(\|d_{\ell}^{[i]}\|^2|\boldsymbol{\chi}^{[i]})}
 < +\infty$;
\item\label{prop:algo:ass_prob} $(\forall b\in\{1,\dots,B\})\; \PP[\varepsilon_{b}^{[0]}=1] > 0$ and $(\forall\ell\in\{1,\dots,L\})\; \PP[\varepsilon_{B+\ell}^{[0]}=1] > 0$.
\end{enumerate}
Then, the sequence $(\ww^{[i]})_{i\in\NN}$ generated by Algorithm~\ref{algo:bcdg} 
converges weakly $\PP$-a.s. to an $\mathcal{E}$-valued 
random variable.
\end{proposition}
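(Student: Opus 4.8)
The plan is to exhibit Algorithm~\ref{algo:bcdg} as a randomly block-activated, error-tolerant Krasnosel'ski\u{\i}--Mann iteration of a single averaged operator, and then to invoke the convergence theory for stochastic quasi-Fej\'er block-coordinate fixed point iterations with random sweeping. First I would rewrite Problem~\ref{problem} as a monotone inclusion on a product space. Setting $F\colon \ww\mapsto \sum_b f_b(w_b)$ on \HHH, and introducing for each $\ell$ the duplicated dual space $\GG_\ell^B$ together with the consensus subspace $\boldsymbol{D}_\ell=\{(y,\dots,y)\mid y\in\GG_\ell\}$, the coupling $\ww\mapsto(\sum_b A_{\ell,b}w_b)_\ell$ is encoded so that the dual variables $\boldsymbol{s}_\ell,\boldsymbol{v}_\ell\in\GG_\ell^B$ of the algorithm carry one copy per primal block. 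The optimality (Kuhn--Tucker) system then reads: find $\ww$ and duals $v_\ell$ with $0\in\partial f_b(w_b)+\sum_\ell A_{\ell,b}^* v_\ell$ and $\sum_b A_{\ell,b}w_b\in\partial h_\ell^*(v_\ell)$, whose primal part is exactly $\mathcal{E}$.

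The second step is the metric and the operator splitting. Since $\nabla h_\ell$ is $\beta_\ell$-Lipschitz, $h_\ell^*$ is $(1/\beta_\ell)$-strongly convex, so $\partial h_\ell^*-\rho_\ell\,\Id$ stays monotone whenever $\rho_\ell$ obeys the stated bound, the factor $B$ coming from the per-block replication. Peeling off this $\rho_\ell\,\Id$ term and absorbing it into a renormed product metric weighted by the constants $(\tau_b)_b$ and $(\gamma_\ell)_\ell$ is precisely the mechanism that lets the scheme exploit strong convexity on the dual side; the resolvent of the remaining operator produces the shifted proximity step $\prox_{\frac{B(1-\gamma_\ell\rho_\ell)}{\gamma_\ell}h_\ell}$, while the reflection pattern yields $p_\ell^{[i]}$ and $q_\ell^{[i]}$. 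On the primal side, the resolvent of the second operator is diagonal across blocks and reduces to applying $C_b$, whose block-diagonal inverse is the announced low-complexity feature; the auxiliary variables $u_b^{[i]}$ merely maintain the aggregate $\sum_\ell\frac{1}{1+\gamma_\ell\rho_\ell}A_{\ell,b}^* s_{\ell,b}^{[i]}$ incrementally, so that the primal resolvent stays consistent without recomputation. Carrying out this identification with deterministic, error-free updates ($\boldsymbol{\varepsilon}^{[i]}\equiv\boldsymbol{1}$, $a_b^{[i]}=0$, $d_\ell^{[i]}=0$) should reproduce one step of Douglas--Rachford, and standard results then guarantee that the associated operator is averaged in the chosen metric and that its fixed point set is nonempty and projects onto $\mathcal{E}$.

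Third, with the operator identified, I would read the general iteration as a random block-coordinate fixed point update: only the blocks indexed by $\varepsilon_b^{[i]}=1$ or $\varepsilon_{B+\ell}^{[i]}=1$ are refreshed, the relaxation $\mu^{[i]}\in{]\eta,2-\eta[}$ plays the role of the Krasnosel'ski\u{\i}--Mann parameter, and $a_b^{[i]},d_\ell^{[i]}$ are the stochastic errors in the proximal evaluations. Assumptions \ref{a:prop1iv}--\ref{prop:algo:ass_prob} are tailored to match the hypotheses of the master convergence theorem: \ref{a:prop1iv} is the independence of the sweeping from the past iterates, \ref{a:prop1i}--\ref{a:prop1ii} furnish the conditional $L^2$-summability of the errors, and \ref{prop:algo:ass_prob} guarantees that every block is activated with positive probability, hence infinitely often almost surely. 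Invoking the theorem then yields weak $\PP$-a.s.\ convergence of the whole fixed point sequence to an a.s.\ fixed point, and extracting the primal component gives the claimed weak a.s.\ convergence of $(\ww^{[i]})_{i\in\NN}$ to an $\mathcal{E}$-valued random variable.

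The main obstacle is the identification step: verifying that the precise coefficients $\gamma_\ell/(1+\gamma_\ell\rho_\ell)$, the weights $1/(1+\gamma_\ell\rho_\ell)$, the factor $B(1-\gamma_\ell\rho_\ell)/\gamma_\ell$ in the proximal step, and the block-diagonal operator $C_b$ all arise consistently from a single renormed Douglas--Rachford iteration on the duplicated primal--dual space, and that the incremental update of $u_b^{[i]}$ exactly preserves this correspondence under partial (block) activation. Once the metric and the two monotone operators are pinned down so that every line of Algorithm~\ref{algo:bcdg} matches a coordinate of the reflect--resolve--relax recursion, the probabilistic conclusion follows directly from the cited block-coordinate quasi-Fej\'er machinery, with assumptions \ref{a:prop1iv}--\ref{prop:algo:ass_prob} being exactly its operating hypotheses.
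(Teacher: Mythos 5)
Your proposal follows essentially the same route as the paper's proof: the duplicated dual space $\GG_\ell^B$ with the summing (consensus) operator, the peeling of $\rho_\ell\Id$ from $\partial(h_\ell\circ\boldsymbol{\Lambda}_\ell)^*$ justified by the $B\beta_\ell$-Lipschitz gradient of $h_\ell\circ\boldsymbol{\Lambda}_\ell$, the renormed product metric $\|\cdot\|_{\boldsymbol{U}^{-1}}$ built from $\tau_b^{-1}$ and $\gamma_\ell^{-1}$, and the final appeal to the random block-coordinate Douglas--Rachford convergence result (with its independence, conditional $L^2$-summability, and positive-activation hypotheses) are exactly the steps of the paper. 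The identification you defer as the ``main obstacle'' --- that $C_b$, the weights $1/(1+\gamma_\ell\rho_\ell)$, and the shifted proximity step $\prox_{\frac{B(1-\gamma_\ell\rho_\ell)}{\gamma_\ell}h_\ell}$ all arise from the resolvent $J_{\boldsymbol{U}\boldsymbol{B}}$ and from $\prox_{\gamma_\ell\boldsymbol{\varphi}_\ell}$ via Moreau decomposition together with $\boldsymbol{\Lambda}_\ell\circ\boldsymbol{\Lambda}_\ell^*=B\Id$ --- is precisely the computation the paper carries out, and it goes through as you anticipate.
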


\begin{proof}
Problem~\ref{problem} can be reformulated as minimizing
$
\ff + \hh\circ \boldsymbol{A}
$
where 
\begin{align}
&
\label{e:fff}\ff\colon\HHH\to]-\infty,+\infty]\colon \ww\mapsto \sum_{b=1}^B f_b(w_b)\\
&\label{e:defA}\boldsymbol{A}\colon \HHH \to \GGG\colon \ww \mapsto 
\left(A_{\ell,1}w_1,\ldots,A_{\ell,B}w_B\right)_{1\le\ell\le L}\\
\label{e:hhh}
&\hh\colon\GGG\to\RR\colon\vv\mapsto 
\sum_{\ell=1}^Lh_\ell(\boldsymbol{\Lambda}_{\ell}\boldsymbol{v}_{\ell} 
)\\
(\forall \ell\in\{1,\ldots,L\})\quad &\boldsymbol{\Lambda}_{\ell}\colon 
\GG_{\ell}^B\to\GG_{\ell}\colon\boldsymbol{v}_{\ell}\mapsto \sum_{b=1}^Bv_{\ell,b}
\end{align}
and $\boldsymbol{v}=(\boldsymbol{v}_{\ell})_{1\leq\ell\le L}$ denotes a generic element of 
$\GGG = \GG_1^B\oplus \cdots \oplus \GG_L^B$  with 
$\boldsymbol{v}_{\ell}=(v_{\ell,b})_{1\leq b\le B}\in\GG_{\ell}^B$ for every 
$\ell\in\{1,\ldots,L\}$.
Since ${\rm dom}(\hh)=\GGG$, from \cite[Theorem~16.47(i)]{Bauschke2017},
Problem~\ref{problem} is equivalent to 
\begin{equation}\label{eq:first_inclusion}
\text{find}\quad\ww\in \HHH\quad\text{such that}\quad  \boldsymbol{0} \in \partial 
\ff(\ww) + \AAA^* 
\nabla\hh\left(\AAA \ww \right),
\end{equation}
which, from \cite[Proposition~2.8]{Briceno2011} is also equivalent to
\begin{equation}\label{eq:old_inclusion}
\text{find}\quad(\ww,\vv) \in \HHH\times \GGG\quad\text{such that 
}\quad(\boldsymbol{0},\boldsymbol{0}) \in 
\boldsymbol{N}(\ww,\vv)+ 
\boldsymbol{S}(\ww,\vv),
\end{equation}
where
$\boldsymbol{N}\colon (\ww,\vv)\mapsto \partial \ff(\ww)\times 
\partial \hh^*(\vv) $ is maximally monotone and $\boldsymbol{S}\colon 
(\ww,\vv)\mapsto(\AAA^*\vv,-\AAA\ww)$ is a skewed linear operator.
Note that $\AAA^*\colon\vv\mapsto (\sum_{\ell=1}^LA_{\ell,b}^*v_{\ell,b})_{1\le b\le B}$
and, from \eqref{e:fff}, \eqref{e:hhh} and 
\cite[Proposition~13.30 and Proposition~16.9]{Bauschke2017},
 $\partial\ff\colon\ww\mapsto\times_{b=1}^B\partial f_{b}(w_b)$ and 
$\partial\hh^*\colon\vv\mapsto\times_{\ell=1}^L\partial 
(h_{\ell}\circ\boldsymbol{\Lambda}_{\ell})^*(\boldsymbol{v}_{\ell})$. Since, for every 
$\ell\in\{1,\ldots,L\}$, 
$h_{\ell}\circ\boldsymbol{\Lambda}_{\ell}$ is convex differentiable with 
a $B\beta_{\ell}-$Lipschitzian gradient
$\nabla(h_{\ell}\circ\boldsymbol{\Lambda}_{\ell})=\boldsymbol{\Lambda}_{\ell}^*
\circ\nabla 
h_{\ell}\circ\boldsymbol{\Lambda}_{\ell}$,
it follows from Baillon-Haddad theorem 
\cite[Corollary~18.17]{Bauschke2017} that 
$\nabla(h_{\ell}\circ\boldsymbol{\Lambda}_{\ell})$ is $(B\beta_{\ell})^{-1}-$cocoercive and, 
hence,
$\partial 
(h_{\ell}\circ\boldsymbol{\Lambda}_{\ell})^*
=\left(\nabla(h_{\ell}\circ\boldsymbol{\Lambda}_{\ell})\right)^{-1}$
is $(B\beta_{\ell})^{-1}-$strongly 
monotone. Therefore, for every $\rho_{\ell}\in[0,(B\beta_{\ell})^{-1}]$,  
$(h_{\ell}\circ\boldsymbol{\Lambda}_{\ell})^*$ is 
$\rho_{\ell}-$strongly convex.
By defining 
\begin{equation}
(\forall \ell \in \{1,\ldots,L\}) \quad \boldsymbol{\varphi}_{\ell}=(h_{\ell}\circ\boldsymbol{\Lambda}_{\ell})^*-\rho_{\ell}\|\cdot\|^2/2,
\end{equation}
it follows from \cite[Proposition~10.8]{Bauschke2017} that, for every $\ell \in \{1,\ldots,L\}$,
$\boldsymbol{\varphi}_{\ell}\in
\Gamma_0(\GG_{\ell}^B)$ and, hence,
$\partial\boldsymbol{\varphi}_{\ell}:=\partial(h_{\ell}\circ\boldsymbol{\Lambda}_{\ell})^*
-\rho_{\ell}\boldsymbol{\Id}$ 
is maximally monotone.
Consequently,
Problem~\ref{problem} can be rewritten equivalently as
\begin{equation}
\label{e:systinclusion}
\text{find}\;\; \quad(\ww,\vv) \in \HHH\times \GGG\quad\text{such that 
}\;\; 
\begin{cases}
(\forall b\in\{1,\ldots,B\})\;\; 0\in\partial f_b(w_b)+B_b(\ww,\vv)\\
(\forall \ell\in\{1,\ldots,L\})\;\; 
0\in\partial\boldsymbol{\varphi}_{\ell}(\vv_{\ell})+B_{\ell}(\ww,\vv),
\end{cases}
\end{equation}
which, for strictly positive constants $(\tau_b)_{1\leq b\leq B}$ and $(\gamma_{\ell})_{1\leq 
\ell\leq L}$, is equivalent to
\begin{equation}
\label{e:systinclusionb}
\text{find}\;\; \quad(\ww,\vv) \in \HHH\times \GGG\quad\text{such that 
}\;\; 
\begin{cases}
(\forall b\in\{1,\ldots,B\})\;\; 0\in\tau_b \partial f_b(w_b)+\tau_b B_b(\ww,\vv)\\
(\forall \ell\in\{1,\ldots,L\})\;\; 
0\in \gamma_\ell\partial\boldsymbol{\varphi}_{\ell}(\vv_{\ell})+\gamma_\ell B_{\ell}(\ww,\vv),
\end{cases}
\end{equation}
where 
\begin{equation}
\begin{cases}
B_b\colon(\ww,\vv)\mapsto\sum_{\ell=1}^LA_{\ell,b}^*v_{\ell,b}\\
B_{\ell}\colon(\ww,\vv)\mapsto-
\left(A_{\ell,1}w_1,\ldots,A_{\ell,B}w_B\right)+\rho_{\ell}\boldsymbol{v}_{\ell}.
\end{cases}
\end{equation}
Since $\boldsymbol{S}\colon 
(\ww,\vv)\mapsto(\AAA^*\vv,-\AAA\ww)$ and $\boldsymbol{D}\colon 
\GGG\to\GGG\colon\boldsymbol{v}\mapsto 
(\rho_{\ell}\boldsymbol{v}_{\ell})_{1\le\ell\le L}$
are linear and monotone operators in $\HHH\times\GGG$ and $\GGG$, respectively,
the operator $$\boldsymbol{B}\colon(\ww,\vv)\mapsto
(\AAA^*\vv,-\AAA\ww+\boldsymbol{D}\boldsymbol{v})=((B_b(\ww,\vv))_{1\le b\le 
	B},(B_{\ell}(\ww,\vv))_{1\le\ell\le 
	L})$$ is maximally monotone
in $\HHH\times\GGG$. Therefore, by defining the strongly positive diagonal linear operator 
\begin{align}
\boldsymbol{U}\colon \HHH\times \GGG &\to \HHH\times \GGG\nonumber\\
(\ww,\vv) &\mapsto (\boldsymbol{T}\ww,\boldsymbol{\Gamma}\vv),
\end{align}
where $\boldsymbol{T}\colon \ww \mapsto (\tau_b w_b)_{1\le b\le B}$ and 
$\boldsymbol{\Gamma}\colon \vv \mapsto (\gamma_\ell \vv_\ell)_{1\le \ell \le L}$,
the operator 
\begin{equation}
\label{e:defUB}
	\boldsymbol{U}\boldsymbol{B}\colon(\ww,\vv)\mapsto
	(\boldsymbol{T}\AAA^*\vv,-\boldsymbol{\Gamma}\AAA\ww+
	\boldsymbol{\Gamma}\boldsymbol{D}\boldsymbol{v})=((\tau_bB_b(\ww,\vv))_{1\le b\le 
		B},(\gamma_{\ell}B_{\ell}(\ww,\vv))_{1\le\ell\le 
		L})
\end{equation}
is maximally monotone
in $(\HHH\times \GGG, \|\cdot\|_{\boldsymbol{U}^{-1}})$, where
\begin{equation}
\big(\forall  (\ww,\vv) \in \HHH\times \GGG\big)\quad
\|  (\ww,\vv) \|_{\boldsymbol{U}^{-1}} = \sqrt{\sum_{b=1}^B \tau_b^{-1} \|w_b\|^2 + \sum_{\ell 
= 1}^L \gamma_\ell^{-1} \|\vv_\ell\|^2}.
\end{equation}
Note that the renormed product space $(\HHH\times \GGG, 
\|\cdot\|_{\boldsymbol{U}^{-1}})$ is the Hilbert sum 
$\HH_1\oplus\cdots\HH_B\oplus\GG^B_1\oplus\cdots\GG^B_L$ where, for every 
$b\in\{1,\ldots,B\}$ and $\ell\in\{1,\ldots,L\}$, $\HH_b$ and $\GG^B_\ell$ are endowed 
by the norm 
$\|\cdot\|_{\tau_b}\colon 
w_b\mapsto\|w_b\|/\sqrt{\tau_b}$
and
$\|\cdot\|_{\gamma_\ell}\colon\boldsymbol{v}_\ell
\mapsto\|\boldsymbol{v}_\ell\|/\sqrt{\gamma_\ell}$, respectively. Therefore,
since $\tau_b\partial f_b$ and $\gamma_{\ell}\partial\boldsymbol{\varphi}_{\ell}$ are 
maximally monotone in $(\HH_b,\|\cdot\|_{\tau_b})$ and 
$(\GG^B_\ell,\|\cdot\|_{\gamma_\ell})$, respectively,
we conclude that \eqref{e:systinclusionb} is a particular case of the primal inclusion in 
\cite[Proposition 5.1]{Combettes2015}.

Now we  write Algorithm~\ref{algo:bcdg} as a particular case of the random 
block-coordinate 
Douglas-Rachford splitting proposed in \cite[Proposition 5.1]{Combettes2015} applied to 
\eqref{e:systinclusionb} in $(\HHH\times \GGG, \|\cdot\|_{\boldsymbol{U}^{-1}})$.
Given 
$(\boldsymbol{t},\boldsymbol{s})\in\HHH\times\GGG$, let 
$(\boldsymbol{w},\boldsymbol{v})=J_{\boldsymbol{U}\boldsymbol{B}}(\boldsymbol{t},
\boldsymbol{s})=(\boldsymbol{\Id}+
\boldsymbol{U}\boldsymbol{B})^{-1}(\boldsymbol{t},\boldsymbol{s})$.
It follows from \eqref{e:defUB} that
\begin{equation}
\label{e:invmatrix}
\begin{cases}
\ww= \tw - \boldsymbol{T} \TT^* \vv\\[0.5em]
\vv= (\boldsymbol{\Id}+ \boldsymbol{\Gamma} \boldsymbol{D})^{-1}(\sv + \boldsymbol{\Gamma} \TT 
\ww),
\end{cases}
\end{equation}
which leads to
\begin{equation}
\label{e:inv2}
\ww = \left(\boldsymbol{\Id} + \boldsymbol{T} \AAA^*  (\boldsymbol{\Id}+ \boldsymbol{\Gamma} 
\boldsymbol{D})^{-1} \boldsymbol{\Gamma}\AAA\right)^{-1}\left(\tw - \boldsymbol{T}\AAA^*  (\boldsymbol{\Id}+ 
\boldsymbol{\Gamma} \boldsymbol{D})^{-1}\sv\right).
\end{equation}
In order to derive an explicit formula for the matrix inversion in \eqref{e:inv2}, 
set $\zz = \tw - \boldsymbol{T}\AAA^*  (\boldsymbol{\Id}+ 
\boldsymbol{\Gamma} \boldsymbol{D})^{-1}\sv$.
We have ${\zz}=\ww+\boldsymbol{T}\AAA^*  (\boldsymbol{\Id}+ 
\boldsymbol{\Gamma} \boldsymbol{D})^{-1}\boldsymbol{\Gamma}\AAA\ww$
and, since \eqref{e:defA} and $\boldsymbol{D}$ is diagonal, we obtain
$$\boldsymbol{T}\AAA^*  (\boldsymbol{\Id}+ 
\boldsymbol{\Gamma} 
\boldsymbol{D})^{-1}\boldsymbol{\Gamma}\AAA\colon\ww\mapsto\left(\tau_b\sum_{\ell=1}^L
\frac{\gamma_\ell A_{\ell,b}^*A_{\ell,b}w_b}{1+\gamma_\ell \rho_{\ell}}\right)_{1\le b\le B}, $$
and, hence,
\begin{equation}
\label{e:13inv}
(\forall b\in\{1,\ldots,B\})\quad  
w_b=\left(\Id+\tau_b\sum_{\ell=1}^L  
\frac{\gamma_\ell A_{\ell,b}^*A_{\ell,b}}{1+\gamma_\ell\rho_{\ell}}\right)^{-1}\!\!\!\!\!z_b
=C_b z_b.
\end{equation}
Therefore, \eqref{e:inv2} can be written equivalently as
\begin{equation}
\label{e:firststep}
(\forall b\in\{1,\ldots,B\})\quad  
w_b=C_b\left(t_b^{[i]}-
\tau_b\sum_{\ell=1}^L\frac{A_{\ell,b}^*s_{\ell,b}}{1+\gamma_\ell\rho_{\ell}}\right)=
C_b(t_b-\tau_b u_b),
\end{equation}
where 
\begin{equation}
(\forall b\in \{1,\ldots,B\}) \qquad u_b= \sum_{\ell=1}^L \frac{A_{\ell,b}^* \, 
s_{\ell,b}}{1+\gamma_\ell\rho_\ell}.
\end{equation}
Moreover, from \eqref{e:invmatrix}, we deduce that
\begin{equation}
(\forall \ell\in\{1,\ldots,L\})\quad \vv_{\ell}=\frac{\sv_{\ell}+
	\gamma_\ell (A_{\ell,b}w_b)_{1\le b\le B}}{1+\gamma_\ell \rho_{\ell}},
\end{equation}
and, hence, we have $J_{\boldsymbol{U}\boldsymbol{B}}\colon(\boldsymbol{t},
\boldsymbol{s})\mapsto\left((Q_b(\boldsymbol{t},
\boldsymbol{s}))_{1\le b\le B},(Q_{\ell}(\boldsymbol{t},
\boldsymbol{s}))_{1\le \ell\le L}\right)$, where 
\begin{equation}
	\begin{cases}
\displaystyle (\forall b\in\{1,\ldots,B\})\quad Q_b\colon (\boldsymbol{t},\sv)\mapsto C_b\left(t_b-
\tau_bu_b\right)\\
\displaystyle (\forall \ell\in\{1,\ldots,L\})\quad Q_{\ell}\colon (\boldsymbol{t},\sv)\mapsto 
\frac{\sv_{\ell}+
	\gamma_\ell (A_{\ell,b}Q_b(\boldsymbol{t},\sv))_{1\le b\le B}}{1+\gamma_\ell\rho_{\ell}}.
\end{cases}
\end{equation}
Now, it follows from \cite[Proposition~16.44]{Bauschke2017} that, for every 
$b\in\{1,\ldots,B\}$ and $\ell\in\{1,\ldots,L\}$, $J_{\tau_b\partial 
f_b}=\prox_{\tau_b f_b}$ and
 $J_{\gamma_\ell\partial\boldsymbol{\varphi}_{\ell}}=\prox_{\gamma_\ell 
 \boldsymbol{\varphi}_{\ell}}$
and,
for every $\ell \in 
	\{1,\ldots,L\}$ 
	and 
$(\rr_\ell,\zz_{\ell})\in 
\GG_\ell^B\times\GG_\ell^B$, we have 
\begin{align}
\rr_\ell = \prox_{\gamma_\ell \boldsymbol{\varphi}_\ell}\zz_\ell
\Leftrightarrow\quad & \frac{\zz_\ell-\rr_\ell }{\gamma_\ell} \in \partial 
\boldsymbol{\varphi}_\ell(\rr_\ell)\nonumber\\
\Leftrightarrow\quad & \frac{\zz_\ell-\rr_\ell }{\gamma_\ell}  \in 
\partial(h_{\ell}\circ\boldsymbol{\Lambda}_{\ell})^*\rr_\ell 
-\rho_{\ell}\rr_\ell \nonumber\\
\Leftrightarrow\quad & \zz_\ell-(1-\gamma_\ell\rho_\ell)\rr_\ell \in \gamma_\ell 
\partial(h_{\ell}\circ\boldsymbol{\Lambda}_{\ell})^*\rr_\ell . \label{eq:prox_htilde_star0}
\end{align}
Therefore, if $\gamma_\ell\rho_\ell  <1$ we have that \eqref{eq:prox_htilde_star0} is 
equivalent to
\begin{equation}
\rr_\ell = \prox_{\frac{\gamma_\ell}{1-\gamma_\ell\rho_\ell} \, 
	(h_\ell\circ\boldsymbol{\Lambda}_{\ell})^*}\Big(\frac{\zz_\ell}{1-\gamma_\ell\rho_\ell} 
\Big)\label{eq:prox_htilde_star}
\end{equation}
and, from Moreau's decomposition formula \cite[Theorem~14.13(ii)]{Bauschke2017}, 
we 
obtain
\begin{equation}
\label{e:calcprox2}
\rr_\ell =  \frac{1}{1-\gamma_\ell\rho_\ell} 
\left(\zz_\ell - \gamma_\ell \prox_{\frac{1-\gamma_\ell\rho_\ell}{\gamma_\ell} 
	(h_\ell\circ\boldsymbol{\Lambda}_{\ell})}\Big(\frac{\zz_\ell}{\gamma_\ell}\Big) 
\right).
\end{equation}
Noting that, for every $\ell \in \{1,\ldots,L\}$, 
$\boldsymbol{\Lambda}_{\ell}\circ\boldsymbol{\Lambda}_{\ell}^*=B\Id$,
from \cite[Proposition~24.14]{Bauschke2017}, \eqref{eq:prox_htilde_star0} and 
\eqref{e:calcprox2} we deduce that
\begin{equation}\label{e:proxhtilde}
(\forall b\in\{1,\ldots,B\})\quad r_{\ell,b}=
\frac{1}{B(1-\gamma_\ell\rho_\ell)} \left(\sum_{d=1}^B 
z_{\ell,d}-\gamma_\ell\prox_{\frac{B(1-\gamma_\ell\rho_\ell)}{\gamma_\ell} 
	h_\ell}\left(\frac{1}{\gamma_\ell}\sum_{d=1}^B 
z_{\ell,d}\right)\right)
\end{equation}
and, hence,
\begin{equation}
\prox_{\gamma_\ell \boldsymbol{\varphi}_\ell}\zz_\ell=\frac{1}{B(1-\gamma_\ell\rho_\ell)} 
\left(\sum_{d=1}^B 
z_{\ell,d}-\gamma_\ell\prox_{\frac{B(1-\gamma_\ell\rho_\ell)}{\gamma_\ell} 
	h_\ell}\left(\frac{1}{\gamma_\ell}\sum_{d=1}^B 
z_{\ell,d}\right)\right)_{1\le b\le B}.
\end{equation}
Therefore, by defining, for every $i\in\NN$ and $\ell\in\{1,\ldots,L\}$, 
${\boldsymbol{e}}_{\ell}^{[i]}\in\GG_{\ell}^B$ 
via
\begin{equation}
\label{e:deftilded2}
(\forall \ell \in \{1,\ldots,L\})\quad  \boldsymbol{e}_{\ell}^{[i]} =  
\left(-\frac{\gamma_\ell}{B(1-\gamma_\ell \rho_\ell)} d_\ell^{[i]}\right)_{1\le b\le B},
\end{equation}
we deduce that Algorithm~\ref{algo:bcdg} can be written equivalently as

\newpage
\noindent \textrm{For }\; $i = 0, 1, \dots$\\[-1em]
\[\hspace{-2.3cm}
\left\lfloor
\begin{aligned}
&\textrm{For $b =1,\ldots,B$}\\[-.5em]
&\left\lfloor
\begin{aligned}
&w_b^{[i+1]}=w_b^{[i]}+\varepsilon_b^{[i]}\left(Q_b(\boldsymbol{t}^{[i]},\sv^{[i]})-w_b^{[i]}\right)\\
&t_b^{[i+1]}=t_b^{[i]} + \varepsilon_b^{[i]}\mu^{[i]} \left( \prox_{\tau_b 
	f_b}(2w_b^{[i+1]}-t_b^{[i]}) + 
a_b^{[i]} - w_b^{[i+1]} \right)\\
\end{aligned}
\right.\\
&\textrm{For $\ell =1,\ldots,L$}\\[-.5em]
&\left\lfloor
\begin{aligned}
&\vv_{\ell}^{[i+1]}=\vv_{\ell}^{[i]}+\varepsilon_{B+\ell}^{[i]}\left(Q_{\ell}(\boldsymbol{t}^{[i]},\sv^{[i]})-\vv_{\ell}^{[i]}\right)\\
&\sv_{\ell}^{[i+1]}=\sv_{\ell}^{[i]} + \varepsilon_{B+\ell}^{[i]}\mu^{[i]} \left( \prox_{\gamma_\ell 
	\boldsymbol{\varphi}_{\ell}}(2\vv_{\ell}^{[i+1]}-\sv_{\ell}^{[i]}) + 
\boldsymbol{e}_{\ell}^{[i]} - \vv_{\ell}^{[i+1]} \right).
\end{aligned}
\right.
\end{aligned}
\right.
\]
Defining, for every $i\in\NN$, $\boldsymbol{a}^{[i]}=\big((a_b^{[i]})_{1\le b\le 
B},(\boldsymbol{e}_{\ell}^{[i]})_{1\le \ell\le L}\big)\in \HHH\times\GGG$, we have 
\begin{align}
\sum_{i\in\NN}\sqrt{\EE(\|\boldsymbol{a}^{[i]}\|^2_{\boldsymbol{U}^{-1}}|\boldsymbol{\chi}^{[i]})} &=
\sum_{i\in\NN}\sqrt{\sum_{b=1}^B\tau_b^{-1}\EE\big(\|{a}_b^{[i]}\|^2|\boldsymbol{\chi}^{[i]}\big)+
	\sum_{\ell=1}^L\gamma_\ell^{-1} 
	\EE\big(\|\boldsymbol{e}_{\ell}^{[i]}\|^2|\boldsymbol{\chi}^{[i]}\big)}\nonumber\\
&\le \sum_{b=1}^B \tau_b^{-1/2} 
\sum_{i\in\NN}\sqrt{\EE(\|{a}_b^{[i]}\|^2|\boldsymbol{\chi}^{[i]})}+
	\sum_{\ell=1}^L{\gamma_\ell}^{-1/2}\sum_{i\in\NN}\sqrt{\EE(\|\boldsymbol{e}_{\ell}^{[i]}\|^2|\boldsymbol{\chi}^{[i]})}
	\nonumber\\
&<+\infty,
\end{align}
where the last inequality follows from \ref{a:prop1i}, \ref{a:prop1ii},
\eqref{e:deftilded2} and
\begin{equation}
\label{e:errorb}
\sum_{i\in\NN}\sqrt{\EE(\|\boldsymbol{e}_{\ell}^{[i]}\|^2|\boldsymbol{\chi}^{[i]})}
=\frac{\gamma_\ell}{\sqrt{B}(1-\gamma_\ell\rho_{\ell})}\sum_{i\in\NN}
\sqrt{\EE(\|d_{\ell}^{[i]}\|^2|\boldsymbol{\chi}^{[i]})}<+\infty.
\end{equation}
Altogether, since operator $J_{\boldsymbol{U}\boldsymbol{B}}$ is weakly 
sequentially continuous because it is continuous and linear, the result follows from
\cite[Proposition 5.1]{Combettes2015} when the error term in the computation of
$J_{\boldsymbol{U}\boldsymbol{B}}$ 
is zero.
\qed\end{proof}
\begin{remark}\
\begin{enumerate}
%
\item  In Proposition~\ref{prop:algo}, 
the binary variables $\varepsilon_b^{[i]}$ and $\varepsilon_{B+\ell}^{[i]}$ signal whether the variables $t_b^{[i]}$ and $\sv_\ell^{[i]}$ are activated or not at iteration $i$. Assumption~\ref{prop:algo:ass_prob} guarantees that each of the latter variables is activated with a nonzero probability at each iteration.
In particular, it must be pointed out that the variables  $p_\ell^{[i]}$ and $q_\ell^{[i]}$ only need to be computed when  $\varepsilon_{B+\ell}^{[i]}=1$.
\item Note that Algorithm \ref{algo:bcdg} may look similar to the stochastic approach proposed 
in  \cite[Corollary 5.5]{Combettes2015} (see also \cite[Remark~2.9]{Briceno2011}, and 
\cite{Bot2013} for deterministic variants). It exhibits however three key differences. Most 
importantly, the operator inversions performed at the initial step amount to inverting a set of 
positive definite self-adjoint operators defined on the spaces $(\HH_b)_{1\le b \le B}$. We 
will see in our application that this reduces to invert a set of small size symmetric  positive 
definite matrices. Another advantage is that the smoothness of the functions $(h_\ell)_{1 
	\leq \ell \leq L}$ is taken into account, and a last one is that the dual variables  appear 
explicitly in the iterations.
\item If, for every $\ell \in \{1,\ldots,L\}$, $\rho_\ell = 0$ and $B = 1$, Algorithm \ref{algo:bcdg} simplifies to Algorithm~\ref{algo:bcdgs}, where unnecessary indices have been dropped and we have set
\begin{equation}
(\forall i \in \NN)\quad 
\begin{cases}
\widetilde{u}^{[i]} = -\tau u^{[i]}\\
(\forall \ell \in \{1,\ldots,L\}\quad \widetilde{s}_{\ell}^{[i]}  = -\tau s_{\ell}^{[i]}.
\end{cases}
\end{equation}
In this case,
\begin{equation}
C = \Big(\Id+\tau \sum_{\ell=1}^L \gamma_\ell A_{\ell}^* \, A_{\ell} \Big)^{-1}.
\end{equation}

\begin{algorithm}
\caption{Random Douglas-Rachford splitting for solving Problem \ref{problem} when $\rho_\ell = 0$ and $B=1$}\label{algo:bcdgs}
{\small
\textsc{Initialization}\\[-0.5em]
\[\hspace{-6.5cm}
\left\lfloor
\begin{aligned}
&\textrm{Set $\tau\in ]0,+\infty[$ and $\eta \in 
]0,1]$.}\\
&\textrm{For every $\ell \in \{1,\ldots,L\}$, set $\gamma_\ell > 0$.}\\
&\textrm{$\widetilde{u}^{[0]} = \sum_{\ell=1}^L  A_{\ell}^* \, \widetilde{s}_{\ell}^{[0]}$}\\[0.25em]
\end{aligned}
\right.
\]
\noindent \textsc{For}\; $i = 0, 1, \dots$\\[-0.5em]	
\[
\!\!\!\!\!\!\!\!\!\!\!\!\!\!\!\!\!\!\!\!\!\!\!\!\!\!\!\!\!\!\!\!\!\!\!\!\!\!\!\!\!\!\!\!\!\!\!\!\!\!\!\!\!\!\!\!\!\!\!\!\!\!\!\!\!\!\!\!\!\!\!
\left\lfloor
\begin{aligned}
&\textrm{Set}\; \mu^{[i]}\in\left]\eta,2-\eta\right[\\
%
& w^{[i+1]} = w^{[i]}+ \varepsilon^{[i]}\left(C\left(t^{[i]}+\widetilde{u}^{[i]}\right)-w^{[i]}\right) \\
& t^{[i+1]} = t^{[i]} + \varepsilon^{[i]}\mu^{[i]} \left( \prox_{\tau 
f}(2w^{[i+1]}-t^{[i]}) + 
a^{[i]} - w^{[i+1]} \right)\\[0.5em]
&\textrm{for $\ell=1,\dots,L$}\\
&\left\lfloor
\begin{aligned}
&q_\ell^{[i]} = \prox_{\frac{h_\ell}{\gamma_\ell} }\left(2A_{\ell} 
w^{[i]}-\frac{\widetilde{s}_{\ell}^{[i]}}{\tau\gamma_\ell}\right) + d_\ell^{[i]}\\
&\widetilde{s}_{\ell}^{[i+1]} = \widetilde{s}_{\ell}^{[i]} + \varepsilon_{\ell+1}^{[i]}\mu^{[i]}\tau\gamma_\ell \Big(q_\ell^{[i]}- A_{\ell} 
w^{[i]}\Big)
\end{aligned}
\right.\\[0.5em]
&\begin{aligned}
\quad &
\widetilde{u}^{[i+1]} = \widetilde{u}^{[i]} + \sum_{\ell=1}^L   \varepsilon_{\ell+1}^{[i]} A_{\ell}^* 
\,  \big(\widetilde{s}_{\ell}^{[i+1]}-\widetilde{s}_{\ell}^{[i]}\big).\\
\end{aligned}\\[-0.5em]
\end{aligned}
\right.
\]
}
\end{algorithm}

When $(\forall \ell \in \{1,\ldots,L\})$ $\gamma_\ell = 1/\tau$, it turns out  this algorithm is exactly the same as the one resulting from a direct application of \cite[Corollary 5.5]{Combettes2015}\cite{Chierchia2017}.

\item The situation when, for a given $\ell$, $h_\ell \in \Gamma_0(\GG_\ell)$ is not Lipschitz-differentiable can be seen as the limit case when $\beta_\ell \to +\infty$. It can then be shown that Algorithm \ref{algo:bcdg} remains valid by setting $\rho_\ell = 0$. 
\end{enumerate}
\end{remark}

\section{Sparse logistic regression}\label{sec:logistic}
The proposed algorithm can be applied in the context of binary linear classification. 
%
%
%
%
A binary linear classifier can be modeled as a function that predicts the output $y \in \{-1,+1\}$ associated to a given input $\xx \in \RR^N$. This prediction is defined through a linear combination of the input components, yielding the decision variable
\begin{equation}\label{eq:prediction}
	d_{\ww}(x) = \sign\big( \xx^\top \ww \big),
\end{equation}
where $\ww\in\RR^N$ is the weight vector to be estimated.
In supervised learning, this weight vector is determined from a set of input-output pairs
\begin{equation}
	\mathcal{S} = \big\{ (\xx_\ell,y_\ell) \in \RR^N \times \{-1,+1\} \;|\; \ell\in\{1,\dots,L\}\big\},
\end{equation}
which is called \emph{training set}. More precisely, the learning task can be defined as the trade-off between fitting the training data and reducing the model complexity, leading to an optimization problem expressed as
\begin{equation}\label{eq:learning_task}
	\minimize_{\ww \in \RR^N} \ff(\ww) + \sum_{\ell=1}^L h\left(y_\ell\,\xx_\ell^\top \ww\right),
\end{equation}
where $\ff\in\Gamma_0(\RR^N)$ is a regularization function and
$h\in\Gamma_0(\RR)$ stands for the loss function. In the context of sparse learning, a 
popular choice for the regularization is the $\ell_1$-norm.
Although many choices for the loss function are possible, we are primarily interested in the logistic loss, which is detailed in the next section.

\subsection{Logistic regression}\label{subsec:logit}
Logistic regression aims at maximizing the posterior probability density function of the weights given the training data, here assumed to 
be a realization of statistically independent input-output random variables. This leads us to
\begin{equation}\label{eq:map}
	\maximize_{\ww\in\RR^N} 
	\boldsymbol{\varphi}(\ww) \prod_{\ell=1}^L \pi(y_\ell \mid \xx_\ell,\ww)\theta_\ell(\xx_\ell|\ww),
\end{equation}
where $\boldsymbol{\varphi}$  is the weight prior probability density function and, for every $\ell \in \{1,\ldots,L\}$, 
$\theta_\ell$ is the conditional data likelihood of the $\ell$-th input knowing the weight values,
while $\pi(y_\ell \mid \xx_\ell,\ww)$ is the conditional probability of the $\ell$-th output knowing the $\ell$-th input and the weights.
Let us model this conditional probability with the sigmoid function defined as
\begin{equation}\label{eq:likelihood}
	\pi(y_\ell\mid \xx_\ell,\ww) = \frac{1}{1+\exp(-y_\ell \xx_\ell^\top \ww)}, 
\end{equation}
and assume that the inputs and the weights are statistically independent and
that $\varphi$ is log-concave. 
Then, the negative-logarithm of the energy in \eqref{eq:map} yields an instance of Problem~\eqref{eq:learning_task} in which
\begin{equation}\label{eq:logistic}
	(\forall v\in\RR)\qquad h(v) = \log\big(1+\exp(-v)\big)
\end{equation}
and, for every $\ww\in \RR^N$, $\ff(\ww) = -\log \boldsymbol{\varphi}(\ww)$. 
(The term $\prod_{\ell=1}^L \theta_\ell(\xx_\ell|\ww)$ can be discarded since  the inputs and the weights are assumed statistically independent.)
The function in \eqref{eq:logistic} is called \emph{logistic loss}. For completeness, note 
that other loss functions, leading to different kinds of classifiers, are the \emph{hinge loss} \cite{Cortes1995}
\begin{equation}
(\forall v\in\RR)\qquad h^{\textrm{hinge}}(v) = \big(\max\{0,1-v\}\big)^{\sf q}
\end{equation}
with ${\sf q}\in\{1,2\}$, and the \emph{Huber loss} \cite{Martins2016}
\begin{equation}
(\forall v\in\RR)\qquad h^{\textrm{huber}}(v) = 
\begin{cases}
0 &\textrm{if $v\ge1$}\\
-v &\textrm{if $v\le-1$}\\
\frac{1}{4}(v-1)^2 &\textrm{otherwise}.
\end{cases}
\end{equation}
These functions can be also handled by the proposed algorithm.

\subsection{Optimization algorithm}\label{subsec:optim}
Let us blockwise decompose the weight variable $\ww\in\RR^N$ as
\begin{equation}\label{eq:block_decomp}
\ww^{\top}=\left[ w_1^{\top} \;\ldots\; w_B^{\top} \right],
\end{equation}
where, for every $b\in \{1,\ldots,B\}$,  $w_b\in\RR^{N_b}$ and
$N_1,\ldots,N_B$ are strictly positive integers such that
$N=N_1+\dots+N_B$. Let us also decompose the input vector 
as $\xx^{\top}=\left[ x_1^{\top} 
\;\ldots\; x_B^{\top} \right]$. Finally, let us assume that the regularization function
is block-separable, i.e. $\boldsymbol{f}=\oplus_{b=1}^Bf_b$,
where, for every $b\in \{1,\ldots,B\}$, $f_b \in \Gamma_0(\RR^{N_b})$.
A typical example of such functions is given by
\begin{equation}\label{e:exfb}
(\forall b \in \{1,\ldots,B\})
\qquad f_b = \lambda\|\cdot\|_{\kappa_b},
\end{equation}
where $\lambda \in [0,+\infty[$ and $\| \cdot \|_{\kappa_b}$, $\kappa_b\in [1,+\infty]$,
denotes the $\ell_{\kappa_b}$-norm of $\RR^{N_b}$.
In particular, when for every $b\in \{1,\ldots,B\}$ $\kappa_b = 1$, $\ff$ reduces to the standard
$\ell_1$ regularizer, whereas setting $\kappa_b \equiv 2$
results in a potential promoting group sparsity \cite{Bach2012}.

In the context described above, \eqref{eq:learning_task} is a particular case of 
Problem~\ref{problem}
where, for every $b \in \{1,\ldots,B\}$,  $\HH_b = \RR^{N_b}$, 
for every $\ell \in \{1,\ldots,L\}$,
$\GG_\ell = \RR$, $h_{\ell}=h$ and $A_{\ell,b}=y_{\ell}x_{\ell,b}^{\top}$.
 Note that since, for every $\ell\in\{1,\ldots,L\}$, $y_{\ell}^2=1$, $A_{\ell,b}^*A_{\ell,b} =  
 x_{\ell,b}x_{\ell,b}^\top$. Moreover,
$h$ defined in \eqref{eq:logistic} is twice differentiable with
\begin{align}
(\forall v \in \RR)\qquad
h'(v)&=-\frac{\exp(-v)}{1+\exp(-v)}, \label{e:derlog}\\
h''(v)&=\frac{\exp(-v)}{(1+\exp(-v))^2}.
\end{align}
 Since $h''$ is maximized in $v=0$, we have
$\sup_{v\in\RR}|h''(v)|=1/4$, which implies that $h'$ is $1/4$-Lipschitz continuous
and we have thus, for every $\ell\in\{1,\ldots,L\}$, $\beta_\ell = 1/4$.

The problem can thus be solved with Algorithm \ref{algo:bcdg}, the convergence of which 
is guaranteed almost surely under the assumptions of Proposition~\ref{prop:algo}.

\subsection{Proximity operator}\label{subsec:prox}
An efficient computation of the proximity operators of functions $(f_b)_{1\le b \le B}$ and $h$ plays a crucial role in the implementation of Algorithm~\ref{algo:bcdg}. 
There exists an extensive literature on the computation of the proximity operators of functions like \eqref{e:exfb} \cite{ProxRepository}.
In particular,
when $\kappa_b = 1$ (resp. $\kappa_b = 2$), this proximity operator reduces to a 
component-wise (resp. blockwise) soft-thresholding \cite{Combettes2010}.
Regarding the logistic loss in \eqref{eq:logistic}, although some numerical methods exist \cite{Parikh2014,Wang2016}, to the best of our knowledge, 
no thorough investigation of the form of its proximity operator has been made. 
The next proposition will contribute to fill such a void.
The result relies on the generalized $\operatorname{W}$-Lambert function  recently 
introduced in \cite{Mezo2014,Maignan2016}, defined via 
\begin{equation}
(\forall \bar{v}\in\RR)(\forall v\in\RR)(\forall r\in\left]0,+\infty\right[)\quad \bar{v}\,
(\exp(\bar{v}) + r)= v \quad\Leftrightarrow\quad \bar{v} = \operatorname{W}_r(v). 
\label{eq:wlambertprop}
\end{equation} 
When $r\in [\exp(-2),+\infty[$, $\operatorname{W}_r$ is uniquely defined and strictly increasing, but when $r \in ]0,\exp(-2)[$, there exist three branches for $\operatorname{W}_r$.
We will retain the only one which can take nonnegative values  (denoted by $\operatorname{W}_{r,0}$ in \cite[Theorem 4]{Mezo2014}) and is also strictly increasing.
This function can be efficiently evaluated through a Newton-based method devised by Mez{\H{o}} \emph{et al.} \cite{Mezo2014} and available on line.\footnote{https://sites.google.com/site/istvanmezo81/others}

\begin{proposition}
	\label{prop:proxh}
Let $\gamma\in\left]0,+\infty\right[$ and let $h\colon v\mapsto \log\big(1+\exp(-v)\big)$. We have
\begin{equation}\label{eq:prox}
(\forall v\in\RR)\quad \prox_{\gamma h} (v) = v + \operatorname{W}_{\exp(-v)} \left(\gamma \exp \left(-v \right) \right).
\end{equation}
\end{proposition}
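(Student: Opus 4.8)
The plan is to compute the proximity operator directly from its defining optimality condition. Since $h(v) = \log(1+\exp(-v))$ is convex and differentiable on all of $\RR$, the point $p = \prox_{\gamma h}(v)$ is characterized by the first-order stationarity condition
\begin{equation}
\label{eq:proxplan_stat}
p - v + \gamma\, h'(p) = 0,
\end{equation}
which, using the expression $h'(p) = -\exp(-p)/(1+\exp(-p))$ from \eqref{e:derlog}, becomes
\begin{equation}
\label{eq:proxplan_eq}
p - v = \gamma\,\frac{\exp(-p)}{1+\exp(-p)}.
\end{equation}
The goal is then to solve \eqref{eq:proxplan_eq} for $p$ and recognize the solution in terms of the generalized Lambert $\operatorname{W}$-function defined in \eqref{eq:wlambertprop}.

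\textbf{Reducing to the Lambert equation.} First I would introduce the shifted unknown $\bar{v} = p - v$, so that $p = v + \bar{v}$ and \eqref{eq:proxplan_eq} reads $\bar{v}\,(1+\exp(-p)) = \gamma\exp(-p)$, i.e.
\begin{equation}
\label{eq:proxplan_bar}
\bar{v}\,(\exp(p) + 1) = \gamma,
\end{equation}
after multiplying through by $\exp(p)$. Writing $\exp(p) = \exp(v)\exp(\bar{v})$ and dividing by $\exp(v)$ turns \eqref{eq:proxplan_bar} into
\begin{equation}
\label{eq:proxplan_lambert}
\bar{v}\,\big(\exp(\bar{v}) + \exp(-v)\big) = \gamma\exp(-v).
\end{equation}
Comparing \eqref{eq:proxplan_lambert} with the defining relation \eqref{eq:wlambertprop}, with the parameter $r = \exp(-v) \in \left]0,+\infty\right[$ and argument $\gamma\exp(-v)$, I would conclude $\bar{v} = \operatorname{W}_{\exp(-v)}\big(\gamma\exp(-v)\big)$, whence $p = v + \operatorname{W}_{\exp(-v)}\big(\gamma\exp(-v)\big)$, which is exactly \eqref{eq:prox}.

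\textbf{The main obstacle} is well-definedness and branch selection. The function $\operatorname{W}_r$ is single-valued only when $r \in [\exp(-2),+\infty[$; for $r = \exp(-v) \in \left]0,\exp(-2)\right[$ (i.e. $v > 2$) the relation \eqref{eq:wlambertprop} admits three branches, so I must verify that \eqref{eq:prox} picks out the correct root $\bar{v}$ of \eqref{eq:proxplan_lambert} that actually equals $\prox_{\gamma h}(v) - v$. The key is that strict convexity of $\xi \mapsto \tfrac12(\xi - v)^2 + \gamma h(\xi)$ guarantees the proximity operator is the unique minimizer, hence \eqref{eq:proxplan_eq} has a unique solution; moreover, since $h' \le 0$ everywhere, this solution satisfies $\bar{v} = p - v \ge 0$. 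I would therefore argue that among the branches of $\operatorname{W}_r$ only the nonnegative, strictly increasing branch $\operatorname{W}_{r,0}$ retained in the statement is compatible with the uniqueness and sign of the true minimizer, so that the retained branch indeed coincides with $\bar{v}$. The remaining verification—that the argument $\gamma\exp(-v)$ lies in the domain of the retained branch and that the multiplication by $\exp(v)$ performed to pass from \eqref{eq:proxplan_eq} to \eqref{eq:proxplan_lambert} is reversible—is routine and I would only indicate it briefly.
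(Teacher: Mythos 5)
Your proposal is correct and follows essentially the same route as the paper: write the stationarity condition $p-v=-\gamma h'(p)$, multiply through to get $(p-v)(\exp(p)+1)=\gamma$, then multiply by $\exp(-v)$ to recognize the defining relation \eqref{eq:wlambertprop} with $r=\exp(-v)$ and argument $\gamma\exp(-v)$. Your additional discussion of branch selection is sound extra rigor (and is in fact automatic here, since for a positive right-hand side $\gamma\exp(-v)$ the equation $\bar{v}(\exp(\bar{v})+r)=\gamma\exp(-v)$ has a unique solution, which is positive because $h'<0$); the paper disposes of this point by the convention, stated before the proposition, of retaining the nonnegative strictly increasing branch $\operatorname{W}_{r,0}$.
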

\begin{proof}
Let $v\in\RR$ and $\gamma\in\left]0,+\infty\right[$. For every $p\in\RR$, it follows from the 
definition of $\prox_{\gamma h}$, \eqref {e:derlog} and \eqref{eq:wlambertprop} that
\begin{align}
p=\prox_{\gamma h}(v)\quad
&\Leftrightarrow\quad  
v-p=-\frac{\gamma\exp(-p)}{1+\exp(-p)}=-\frac{\gamma}{\exp(p)+1}\label{e:aux}\\
&\Leftrightarrow\quad  
(p-v)(\exp(p)+1)=\gamma\nonumber\\
&\Leftrightarrow\quad  
(p-v)(\exp(p-v)+\exp(-v))=\gamma\exp(-v)\nonumber\\
&\Leftrightarrow\quad  
p-v=W_{\exp(-v)}(\gamma\exp(-v))
\end{align}
and the result follows.
\qed
\end{proof}

From a numerical standpoint, it must be emphasized that the exponentiation in \eqref{eq:prox} may be problematic, as it yields an arithmetic overflow when $v$ tends to $-\infty$. To overcome this issue, one can use the asymptotic equivalence\footnote{Hereafter, following Landau's notation,  we will write that $F(v) = \vartheta(G(v))$,
where $F\colon \RR \to \RR$ and $G\colon \RR \to \RR$, if $F(v)/G(v) \to 0$
as $v \to +\infty$ (or $v \to -\infty$).}
between the proximity operator of the logistic function and other more tractable functions.
\begin{proposition}
Let $\gamma\in\left]0,+\infty\right[$ and let $h\colon v\mapsto \log(1+\exp(-v))$. Then, as 
$v \to -\infty$,
\begin{equation}
\prox_{\gamma h}(v) 
= v + \gamma \big(1 - \exp(\gamma + v) + (1+\gamma)  \exp( 2(\gamma + v))\big) 
+ \vartheta(\exp(2v)).
\end{equation} 
\end{proposition}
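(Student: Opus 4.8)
The plan is to bypass the generalized Lambert $W$ function and work directly with the implicit optimality relation obtained along the way in the proof of Proposition~\ref{prop:proxh}. Writing $p=\prox_{\gamma h}(v)$ and $t=p-v$, relation \eqref{e:aux} rearranges into $t\,(\exp(v+t)+1)=\gamma$, that is
\begin{equation*}
t=\frac{\gamma}{1+\exp(v+t)}.
\end{equation*}
Since the right-hand side takes values in $]0,\gamma[$ and is strictly decreasing in $t$, this equation has a unique solution $t\in\,]0,\gamma[$; moreover $t<\gamma$ forces $\exp(v+t)<\exp(\gamma+v)$, whence $t>\gamma/(1+\exp(\gamma+v))$, and a squeeze gives $t\to\gamma$ as $v\to-\infty$. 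This identifies the leading term.

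Next I would set $t=\gamma+s$ and introduce the natural small parameter $\delta=\exp(\gamma+v)$, so that $\exp(v+t)=\delta\exp(s)$ and the fixed-point equation becomes
\begin{equation*}
s=\frac{-\gamma\,\delta\,\exp(s)}{1+\delta\,\exp(s)}.
\end{equation*}
From the previous step $s=O(\delta)$, so I would bootstrap this relation: substituting $s=-\gamma\delta+O(\delta^2)$, expanding $\exp(s)=1-\gamma\delta+O(\delta^2)$ in the numerator, and developing the denominator through the geometric series $1/(1+\delta\exp(s))=1-\delta+O(\delta^2)$, one collects the terms up to order $\delta^2$ to obtain $s=-\gamma\delta+\gamma(1+\gamma)\delta^2+O(\delta^3)$. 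Hence $t=\gamma\big(1-\delta+(1+\gamma)\delta^2\big)+O(\delta^3)$, and substituting $\delta=\exp(\gamma+v)$, $\delta^2=\exp(2(\gamma+v))$ reproduces the stated expansion.

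It then remains to cast the remainder in the required form. The error is $O(\delta^3)=O(\exp(3(\gamma+v)))$, and since $\exp(3(\gamma+v))/\exp(2v)=\exp(3\gamma+v)\to0$ as $v\to-\infty$, it is indeed $\vartheta(\exp(2v))$ in the sense of the footnote convention. The main obstacle is not the algebra but the rigorous control of the two asymptotic substitutions: I must first secure $s=O(\delta)$ before Taylor-expanding $\exp(s)$, and then verify that the remainders produced by the Taylor and geometric expansions are genuinely $O(\delta^3)$ as $v\to-\infty$. Both are clean once $t\in\,]0,\gamma[$ and $t\to\gamma$ have been established, because $s$ is then bounded and all expansions are performed about a fixed point; the strict monotonicity noted above also rules out any spurious branch, so the solution $t$ singled out by the expansion is the genuine value of $\prox_{\gamma h}(v)-v$.
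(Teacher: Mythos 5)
Your proof is correct, and although its computational core coincides with the paper's (both expand the same implicit relation \eqref{e:aux}, i.e. $t\,(\exp(v+t)+1)=\gamma$ with $t=\prox_{\gamma h}(v)-v$, around $t=\gamma$ to second order in $\delta=\exp(\gamma+v)$), the way you establish the leading behavior is genuinely different and more elementary. The paper keeps the generalized Lambert representation $\varphi(v)=\operatorname{W}_{\exp(-v)}(\gamma\exp(-v))$ from Proposition~\ref{prop:proxh}, invokes Moreau's decomposition $\varphi=-\gamma\prox_{h^*/\gamma}(\cdot/\gamma)$ together with the continuity and monotonicity of proximity operators to prove that $\lim_{v\to-\infty}\varphi(v)$ exists, identifies the limit as $\gamma$ from boundedness of $\varphi\exp(\varphi)$, and then solves the linearized relation for $u=\varphi-\gamma$ exactly before expanding the resulting quotient. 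You dispense with the Lambert function and with the prox-theoretic lemmas altogether: the squeeze $\gamma/(1+\exp(\gamma+v))<t<\gamma$ simultaneously proves $t\to\gamma$ and delivers the quantitative bound $s=t-\gamma=O(\delta)$ that your bootstrap needs as a starting point, after which two rounds of substitution give $s=-\gamma\delta+\gamma(1+\gamma)\delta^2+O(\delta^3)$, reproducing the paper's coefficients; your remainder $O\big(\exp(3(\gamma+v))\big)$ is in fact slightly stronger than the stated $\vartheta(\exp(2v))$. What the paper's route buys is reuse of machinery already in place (the closed-form prox of Proposition~\ref{prop:proxh} and standard results from convex analysis, including the structural fact that $\prox_{\gamma h}-\Id$ is decreasing and continuous); what your route buys is self-containedness---no external results are needed beyond the optimality condition defining the prox---and explicit quantitative control of the error from the very first step, since the squeeze replaces the paper's qualitative limit argument and its intermediate relation \eqref{e:interm3}. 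Your closing remarks on uniqueness are harmless but not strictly necessary: since $t$ is defined as $\prox_{\gamma h}(v)-v$ and satisfies the equation, all subsequent deductions automatically concern the genuine prox value.
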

\begin{proof}
Define
\begin{equation}\label{e:deft}
(\forall v\in\RR) \quad \varphi(v) = W_{\exp(-v)}(\gamma\exp(-v)).
\end{equation}
According to Proposition~\ref{prop:proxh}, 
\begin{equation}\label{e:phiproxh}
\varphi=\prox_{\gamma h}-\Id.
\end{equation}
It follows from  \eqref{e:aux} that 
${\rm ran }\,\varphi={\rm ran}(\prox_{\gamma h}-\Id)\subset \left]0,\gamma\right[$.
Moreover, from \cite[Section~24.2]{Bauschke2017} we deduce that $\varphi=-\gamma\prox_{h^*/\gamma}(\cdot/\gamma)$, 
which is decreasing and continuous by virtue of
\cite[Proposition~24.31]{Bauschke2017}. Therefore, $\lim_{v\to-\infty}\varphi(v)$ exists and
from \eqref{eq:wlambertprop} we have
\begin{equation}
\label{e:interm}
(\forall v\in\RR) \quad  \varphi(v)\exp(\varphi(v))=(\gamma-\varphi(v))\exp(-v).
\end{equation}
Since the left side of the equality above is bounded, we deduce that 
$\lim_{v\to-\infty}\varphi(v)=\gamma$. Subsequently, we define
\begin{equation}
\label{e:defu}
(\forall v\in\RR)\quad u(v)=\varphi(v)-\gamma\quad \text{satisfying}\quad 
\lim_{v\to-\infty}u(v)=0.
\end{equation}
Hence, \eqref{e:interm} can be rewritten as
\begin{equation}
(\gamma+u(v))\exp(\gamma)\exp(u(v))=-u(v)\exp(-v)
\end{equation}
and by using the first order Taylor expansion around $\xi=0$, $\exp(\xi)=1+\xi+\vartheta(\xi)$ 
and the fact that $u(v)^2+(\gamma+u(v))\vartheta(u(v))=\vartheta(u(v))$,
we obtain
\begin{align}
\label{e:interm2}
u(v)&=-\exp(\gamma+v)(\gamma+u(v))(1+u(v)+\vartheta(u(v)))\nonumber\\
&=-\exp(\gamma+v)(\gamma+(\gamma+1)u(v)+\vartheta(u(v)))\nonumber\\
&=-\gamma\exp(\gamma+v)-(\gamma+1)\exp(\gamma+v)u(v)-\exp(\gamma+v)\vartheta(u(v)).
\end{align}
We deduce from this relation that
\begin{equation}\label{e:interm2.5}
u(v)=-\frac{\gamma\exp(\gamma+v)+\exp(\gamma+v)\vartheta(u(v))}{1+(\gamma+1)\exp(\gamma+v)}.
\end{equation}
It follows that
\begin{equation}\label{e:interm3}
\lim_{v \to -\infty} u(v)\exp(-v) =- \gamma \exp(\gamma),
\end{equation}
which implies that $\exp(\gamma+v)\vartheta(u(v))=\vartheta(\exp(2v))$
and, from \eqref{e:interm2.5} we obtain
\begin{equation}\label{e:uord2}
u(v)=-\frac{\gamma\exp(\gamma+v)}{1+(\gamma+1)\exp(\gamma+v)}+\vartheta(\exp(2v)).
\end{equation}
Combining \eqref{e:phiproxh}, \eqref{e:defu}, and \eqref{e:uord2} yields
\begin{align}
\prox_{\gamma h}(v)&=v+\gamma\left(1-\frac{\exp(\gamma+v)}
{1+(\gamma+1)\exp(\gamma+v)}\right)+\vartheta(\exp(2v))\nonumber\\
&=v+\gamma\left(\frac{1+\gamma\exp(\gamma+v)}
{1+(\gamma+1)\exp(\gamma+v)}\right)+\vartheta(\exp(2v))\nonumber\\
&=v+\gamma\left(1-\exp(\gamma+v)+(1+\gamma)\exp(2(\gamma+v))\right)
+\vartheta(\exp(2v)),
\end{align}
where the last equality follows from the second order Taylor expansion around $\xi = 0$.
\qed
\end{proof}

\section{Experimental results}\label{sec:four}
In order to assess the performance of Algorithm~\ref{algo:bcdg}, we performed the training on standard datasets\footnote{http://www.csie.ntu.edu.tw/$\sim$cjlin/libsvmtools/datasets/binary.html}\footnote{http://yann.lecun.com/exdb/mnist} (see Table~\ref{tab:datasets}), and we compared it with the following approaches.
\begin{itemize}
\item Stochastic Forward-Backward splitting (SFB) \cite{Richtarik2014, Rosasco2014, Combettes2016, Atchade2017}
\begin{equation*}
\begin{aligned}
&w^{[0]}\in\RR^N\\
&\textrm{For $i=0,1,\dots$}\\
&\left\lfloor
\begin{aligned}
&\textrm{Select $\;\LL^{[i]} \subset \{1,\dots,L\}$}\\
%
%
& w^{[i+1]} = \prox_{\gamma_i f}\Big( w^{[i]} - \gamma_i \sum_{\ell\in\LL^{[i]}} y_\ell x_\ell h'\big(y_\ell x_\ell^\top w^{[i]}\big) \Big)
\end{aligned}
\right.
\end{aligned}
\end{equation*}
where $(\gamma_i)_{i\in\NN}$ is a decreasing sequence of positive values.

\item 
Regularized Dual Averaging (RDA) \cite{Xiao2010}
\begin{equation*}
\begin{aligned}
&w^{[0]}\in\RR^N,\; z^{[0]} = 0\\
&\textrm{For $i=0,1,\dots$}\\
&\left\lfloor
\begin{aligned}
&\textrm{Select $\;\LL^{[i]} \subset \{1,\dots,L\}$}\\
& z^{[i+1]} = z^{[i]} + \sum_{\ell\in\LL^{[i]}} y_\ell x_\ell h'\big(y_\ell x_\ell^\top w^{[i]}\big)\\
%
%
& w^{[i+1]} = \prox_{\gamma_i f}\big( - \gamma_i \, z^{[i+1]} \big)
\end{aligned}
\right.
\end{aligned}
\end{equation*}
where $(\gamma_i)_{i\in\NN}$ is a decreasing sequence of positive values.

\item Block-Coordinate Primal-Dual splitting (BCPD) \cite{Chierchia2016}
\begin{equation*}
\begin{aligned}
&w^{[0]} \in \RR^N,\; v^{[0]} \in \RR^L\\
&\textrm{For $i=0,1,\dots$}\\
&\left\lfloor
\begin{aligned}
&\textrm{Select $\;\LL^{[i]} \subset \{1,\dots,L\}$}\\
&w^{[i+1]} = \prox_{\tau f}\big(w^{[i]} - \tau u^{[i]}\big)\\
& (\forall \ell\in\LL^{[i]})\quad v^{[i+1]}_\ell = \prox_{\sigma h^*}\Big(v^{[i]}_\ell + \sigma y_\ell x_\ell^\top \big(2w^{[i+1]} - w^{[i]}\big)\Big)\\
& (\forall \ell\notin\LL_i)\quad v^{[i+1]}_\ell = v^{[i]}_\ell\\[0.5em]
&u^{[i+1]} = u^{[i]} + \sum_{\ell\in\LL^{[i]}} \big(v_\ell^{[i+1]}-v_\ell^{[i]}\big)y_\ell x_\ell \\
\end{aligned}
\right.
\end{aligned}
\end{equation*}
where $\tau>0$ and $\sigma>0$ are such that $\tau\sigma\big\|\sum_{\ell=1}^L x_\ell x_\ell^\top\big\| \le 1$.
\end{itemize}

The algorithmic parameters are reported in Table~\ref{tab:params}. For all the algorithms, mini-batches of size $1000$ were randomly selected using a uniform distribution, and the initial vector $w^{[0]}$ was randomly drawn from the normal distribution with zero mean and unit variance. For the datasets with more than two classes (MNIST and RCV1), the ``one-versus-all'' approach is used \cite{Rifkin2004}. All experiments were carried out with Matlab 2015a on an Intel i7 CPU at 3.40 GHz and 12 GB of RAM.

\begin{table}
	\centering
	\caption{Training sets used in the experiments ($K$ is the number of classes).}
	\label{tab:datasets}
	\begin{tabular}{lcccl}
		\toprule
		Dataset & $N$ & $L$ & $K$ & \\
		\midrule
		W8A   & 300   & 49749 & 2  \\
		MNIST & 717   & 60000 & 10 \\
		RCV1  & 12560 & 30879 & 20 \\
		\bottomrule
	\end{tabular}
\end{table}

\begin{table}
\centering
\caption{Algorithmic parameters used in the experiments.}
\label{tab:params}
\begin{tabular}{lccccccc}
\toprule
& SFB / RDA & \multicolumn{4}{c}{Algo~\ref{algo:bcdg}} & \multicolumn{2}{c}{BCPD} \\
\cmidrule(lr){2-2} \cmidrule(lr){3-6} \cmidrule(lr){7-8} 
Dataset & $\gamma_i$ & $\gamma,\tau$ & $\mu^{[i]}$ & $\rho$ & $B$ & $\tau$ & $\sigma$\\
\midrule
W8A     &  $10^{-1}/\sqrt{i+1}$ & \multirow{3}{*}{$1$} & \multirow{3}{*}{$1.5$} & \multirow{3}{*}{$0.1$} & 1 & \multirow{3}{*}{$0.1$} & \multirow{3}{*}{$\displaystyle\tau^{-1}\big\|\sum_{\ell=1}^L x_\ell x_\ell^\top\big\|^{-1}$}\\[0.25em]
MNIST   & $1/\sqrt{i+1}$ & &&& 1\\[0.25em]
RCV1  & $10/\sqrt{i+1}$ & &&& 9\\
\bottomrule
\end{tabular}
\end{table}

Table~\ref{tab:test} reports the classification performance achieved by the compared algorithms, which includes the classification errors computed on a (disjoint) test set, as well as the sparsity degree of the solution. For all the considered datasets, the regularization parameter $\lambda$ was selected with a cross-validation procedure. The results show that the proposed algorithm finds a solution that yields the same accuracy as state-of-the-art methods, while being sparser than the ones produced by gradient-like methods (SFB and RDA). 

Figure~\ref{fig:comparison} reports the training performance versus time of the considered algorithms, which includes the criterion in \eqref{eq:learning_task}, and the distance to the solution $w^{[\infty]}$ obtained after many iterations for each compared method. The results indicate that the proposed approach converges faster to a smaller value of the objective criterion. This could be related to the implicit preconditioning present in Algorithm~\ref{algo:bcdg} through the matrix $Q$. Another interesting feature of our algorithm is the free choice of parameters $\gamma$ and $\mu_i$. Conversely, in both SFB and RDA, the parameter $\gamma_i$ (also referred to as \emph{learning rate}) needs to be carefully selected by hand, causing such algorithms to slow down or even diverge if the learning rate is chosen too small or too high. 

\begin{table}
	\centering
	\caption{Classification performance on test sets (after training for a fixed number of iterations).}
	\label{tab:test}
	{\scriptsize
	\begin{tabular}{@{}lc@{ -- }cc@{ -- }cc@{ -- }cc@{ -- }c@{}}
		\toprule
		\multirow{2.5}*{\sc Dataset} & \multicolumn{2}{c}{Algo~\ref{algo:bcdg}} &  \multicolumn{2}{c}{SFB} &  \multicolumn{2}{c}{RDA} &  \multicolumn{2}{c}{BCPD} \\
		\cmidrule(lr){2-3} \cmidrule(lr){4-5} \cmidrule(lr){6-7} \cmidrule(lr){8-9} 
		 & Errors & Zeros & Errors & Zeros & Errors & Zeros & Errors & Zeros\\
		\midrule
		\textsc{w8a}     & 9.73\% & 19.60\% & 9.92\% & 5.65\% & 9.99\% & 0.33\% & 10.44\% & 50.50\% \\
		\textsc{mnist}   & 8.49\% & 41.57\% & 8.37\% & 5.25\% & 8.60\% & 11.13\% & 8.45\% & 58.64\%\\
		\textsc{rcv1}    & 6.62\% & 83.43\% & 6.67\% & 35.22\% & 6.60\% & 32.90\% & 6.25\% & 98.57\% \\
		\bottomrule
	\end{tabular}
}
\end{table}

\begin{figure}
\centering
\subfloat[W8A: criterion in \eqref{eq:learning_task} vs time]{\includegraphics[width=0.45\linewidth]{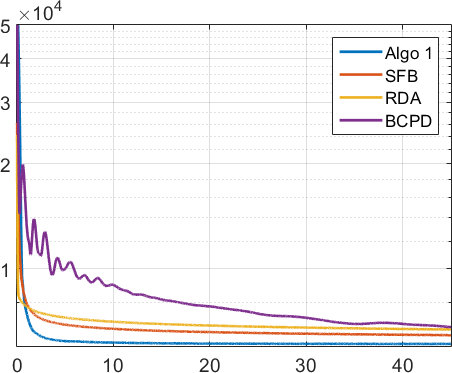}}
\hfill
\subfloat[W8A: distance to $w^{[\infty]}$ vs time]{\includegraphics[width=0.45\linewidth]{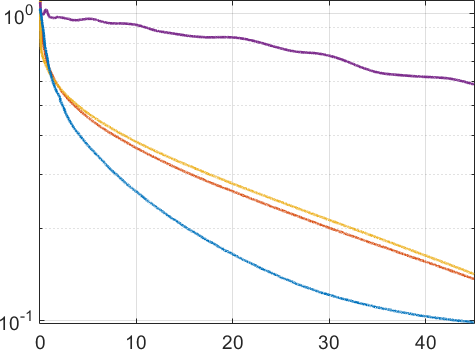}}
\\
\subfloat[MNIST: criterion in \eqref{eq:learning_task} vs time]{\includegraphics[width=0.45\linewidth]{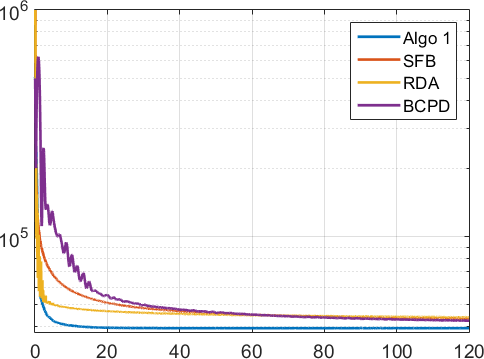}}
\hfill
\subfloat[MNIST: distance to $w^{[\infty]}$ vs time]{\includegraphics[width=0.45\linewidth]{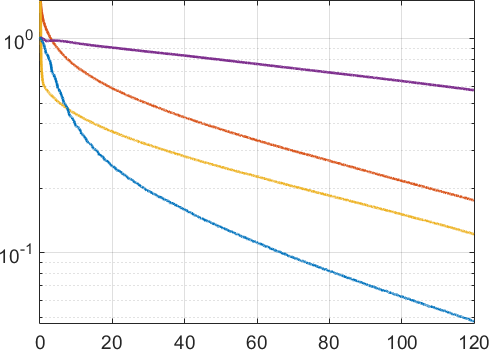}}
\\
\subfloat[RCV1: criterion in \eqref{eq:learning_task} vs time]{\includegraphics[width=0.45\linewidth]{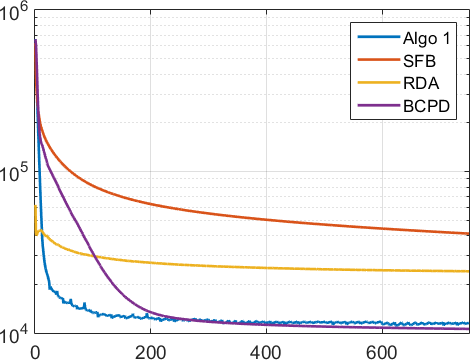}}
\hfill
\subfloat[RCV1: distance to $w^{[\infty]}$ vs time]{\includegraphics[width=0.45\linewidth]{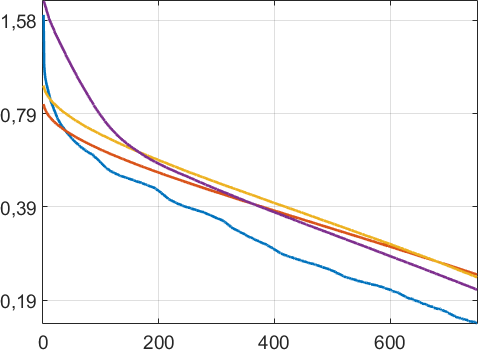}}
\caption{Comparison of training performance (time is expressed in seconds).}
\label{fig:comparison}
\end{figure}

\section{Conclusion}\label{sec:five}
In this paper, we have proposed a block-coordinate Douglas-Rachford algorithm for sparse logistic regression. In contrast to gradient-like methods, our approach relies on the proximity operator of the logistic loss, for which we derived a closed-form expression that can be efficiently implemented. Thanks to this feature, our approach removes restrictions on the choice of the algorithm parameters, unlike gradient-like methods, for which it is essential that the learning rate is carefully chosen. This is confirmed by our numerical results, which indicate that the training performance of the proposed algorithm compares favorably with state-of-the-art stochastic methods. 

\bibliographystyle{IEEEtran}
\bibliography{bib/IEEEabrv,bib/refs}

\end{document}